\DeclareFontFamily{OT1}{pzc}{}
\DeclareFontShape{OT1}{pzc}{m}{it}{<-> s * [1.10] pzcmi7t}{}
\DeclareMathAlphabet{\mathpzc}{OT1}{pzc}{m}{it}
\DeclareSymbolFontAlphabet{\mathbb}{AMSb}
\DeclareSymbolFontAlphabet{\mathbbl}{bbold}
\newcommand{\noun}[1]{\textsc{#1}}
\numberwithin{equation}{section}
\numberwithin{figure}{section}
\theoremstyle{plain}
\newtheorem{thm}{\protect\theoremname}[section]
\theoremstyle{remark}
\newtheorem{rem}[thm]{\protect\remarkname}
\theoremstyle{definition}
\newtheorem{defn}[thm]{\protect\definitionname}
\theoremstyle{definition}
\newtheorem{example}[thm]{\protect\examplename}
\theoremstyle{plain}
\newtheorem{cor}[thm]{\protect\corollaryname}
\theoremstyle{plain}
\theoremstyle{plain}
\newenvironment{psmatrix}
  {\left(\begin{smallmatrix}}
  {\end{smallmatrix}\right)}
\def\d{\partial}
\def\QQ{\mathbb{Q}}
\def\RR{\mathbb{R}}
\def\CC{\mathbb{C}}
\def\ZZ{\mathbb{Z}}
\def\PP{\mathbb{P}}
\def\GG{\mathbb{G}}
\def\W{\mathcal{W}}
\def\Z{\mathcal{Z}}
\def\V{\mathcal{V}}
\def\Y{\mathcal{Y}}
\def\X{\mathcal{X}}
\def\T{\mathcal{T}}
\def\Y{\mathcal{Y}}
\def\H{\mathcal{H}}
\def\NN{\mathbb{N}}
\def\D{\mathcal{D}}
\def\vf{\varphi}
\def\Res{\text{Res}}
\def\MHS{\mathrm{MHS}}
\def\IH{\mathrm{IH}}
\def\B{\mathcal{B}}
\def\Bb{\overline{\B}}
\def\Xb{\overline{\X}}
\def\E{\mathcal{E}}
\def\SL{\mathrm{SL}}
\def\UH{\mathfrak{H}}
\def\TS{\widetilde{\Sigma}}
\def\tT{\widetilde{\T}}
\def\QB{\overline{\QQ}}
\def\CH{\mathrm{CH}}
\def\uc{\underline{c}}
\def\ANF{\mathrm{ANF}}
\def\Ext{\mathrm{Ext}}
\def\AVMHS{\mathrm{AVMHS}}
\def\Hom{\mathrm{Hom}}
\def\Hg{\mathrm{Hg}}
\def\b{\mathfrak{b}}
\def\c{\mathfrak{c}}
\def\r{\mathfrak{r}}
\def\ay{\mathbf{i}}
\def\G{\mathfrak{G}}
\def\C{\mathcal{C}}
\def\Yb{\overline{\Y}}
\def\Cb{\overline{\C}}
\def\fZ{\mathfrak{Z}}
\def\fW{\mathfrak{W}}
\def\K{\mathcal{K}}
\def\Kb{\overline{\K}}
\def\PSL{\mathrm{PSL}}
\def\g{\mathcal{G}}
\def\PD{\PP_{\Delta}}
\theoremstyle{definition}
\theoremstyle{definition}
\theoremstyle{definition}
\newtheorem*{thx}{Acknowledgments}
  \providecommand{\corollaryname}{Corollary}
  \providecommand{\definitionname}{Definition}
  \providecommand{\remarkname}{Remark}
\providecommand{\theoremname}{Theorem}
 \providecommand{\examplename}{Example}
\begin{document}

\title{A note on higher Green's functions}

\author{Charles F. Doran}

\author{Matt Kerr}

\subjclass[2000]{11F67, 14C30, 19E15}
\begin{abstract}
We give a cycle-theoretic proof of the Gross-Zagier conjecture in weight four for several modular curves of genus zero.
\end{abstract}
\maketitle

\section{Introduction}\label{S1}

Let $\Gamma\leq \PSL_2(\ZZ)$ be a congruence subgroup, $\B:=\Gamma\backslash\UH$ the corresponding modular curve, $\Sigma\subset \B$ a finite subset with preimage $\TS$ under $\rho\colon\UH\twoheadrightarrow\B$, and $k\in\ZZ_{\geq 2}$.  A \emph{higher Green's function} $G(\tau)$ of weight $2k$ and level $\Gamma$ with poles on $\Sigma$ is a $\Gamma$-invariant real-analytic function on $\UH\setminus\TS$ limiting to zero on cusps, asymptotic to $c_{\rho(\hat{\tau})}\log|\tau-\hat{\tau}|$ at each $\hat{\tau}\in \TS$ (for some choice of $\uc=(c_{\sigma})_{\sigma\in \Sigma}\in \QQ^{\Sigma}$), and satisfying $\Delta_{\text{hyp}}G:=-4\Im(\tau)^2\frac{\d}{\d\bar{\tau}}\frac{\d}{\d\tau}G=k(1-k)G$.  These exist (and are unique) for any choice of $\Gamma$, $\Sigma$, $k$ and $\uc$.  We will denote by $G_b(\tau)$ the Green's function associated to $\Sigma=\{b\}$ and $c=1$ (and given $k,\Gamma$).

Denoting by $T_m\in \CH^1(\Bb\times\Bb)$ the Hecke correspondences, a finite sum $\T:=\sum_m a_m T_m$ ($a_m\in \QQ$) with $\T_*S_{2k}(\Gamma)=\{0\}$ is called a \emph{relation} (of level $2k$).  In the final pages of \cite{GrZ} (extended in \cite{GKZ}), Gross and Zagier conjectured that for any CM point $\b\in \B$ and relation $\T$, the combination $G_{\T,\b}:=\sum_m m^{k-1}a_mT_m^*G_{\b}$ takes values in $\QB\log\QB$ on all CM points in the complement of $\Sigma_{\T,\b}:=|\T^*\{\b\}|$, where $G_{\T,\b}$ has poles.  (They are a little more precise about the special values, and assume $\Gamma=\Gamma_0(N)$ for some $N\in \NN$, with $a_m=0$ if $(m,N)\neq 1$.)  After numerous advances by many researchers, this conjecture was proved in 2022 by Bruinier, Li, and Yang \cite{BLY}.

In earlier work of Mellit \cite{Me} and Kerr \cite{Ke}, the Gross-Zagier conjecture was connected to existence questions for certain motivic cohomology classes (of $K_1$-type) on the self-fiber product $\E^{2k-2}$ of the elliptic modular surface $\E\to\B$.  This theme has been picked up in an impressive recent preprint \cite{BF} by Brown and Fonseca, from the standpoint of motives and periods, and features a motivic proof of Gross-Zagier in weight 4 and level $1$ ($\Gamma=\PSL_2(\ZZ)$).  Also see the related results of Sreekantan \cite{Sr1,Sr2}.

Inspired by their work, the purpose of this note is to give a simple account in the weight 4 ($k=2$) case of two things:
\begin{itemize}[leftmargin=0.5cm]
\item Gross-Zagier may be viewed as an immediate consequence of the Beilinson-Hodge conjecture (which generalizes the Hodge conjecture to higher $K$-theory); and
\item it also follows from the arithmetic Bloch-Beilinson conjecture (on injectivity of the Deligne cycle-class), which leads to a proof when $\B\subset \PP^1$ is the complement of the discriminant locus in the base of a Landau-Ginzburg (LG-) model mirror to a Fano 3-fold of Picard rank 1.
\end{itemize}
We will make use of the results of \cite[\S3]{Ke}. The key is to view Gross-Zagier as a statement about the special values of a higher normal function, which is implied whenever the normal function is motivic (comes from a higher cycle).  

Moreover, a version of the conjecture (the one proved in the second bullet point above) makes sense for more general quotients of the upper half-plane, in particular when we extend $\Gamma$ to $\hat{\Gamma}\leq \PSL_2(\RR)$ by a normalizing Fricke-type involution $\gamma_N$. In the statement, $S_4(\Gamma)$ is replaced by $S_4(\hat{\Gamma},\chi)$ where $\chi$ is the character sending $\Gamma\mapsto 1$ and $\gamma_N\mapsto -1$, and the Green's function is anti-invariant under $\gamma_N$ so only descends to a double-cover of $\hat{\Gamma}\backslash\UH$.  This is natural from our point of view, since the above-mentioned LG-models are families of ${K3}$ surfaces of Shioda-Inose type, which in most cases live over a modular curve of the form $\Gamma_0(N)^{+N}\backslash\UH$.  

The modularity of mirror maps for families of rank 19 ${K3}$ surfaces was characterized by Doran \cite{Dor}, and their role as compactified fibers of LG-models mirror to Fano threefolds was shown by Golyshev \cite{Go} in Picard rank 1.  The full classification of the families of lattice polarized ${K3}$ surfaces comprising the LG-model mirrors to Fano threefolds of higher rank was recently completed by Doran and collaborators in  \cite{DHK+}.  We also address here several additional examples of families of rank 19 ${K3}$ surface fibers of  LG-models (first explored by Galkin \cite{Ga1,Ga2}) lying in Noether-Lefschetz loci of these multiparameter families mirror to Fano threefolds of Picard rank greater than one.

\begin{thx}
CFD acknowledges the support of the Natural Sciences and Engineering Council of Canada (NSERC) and the Center of Mathematical Sciences and Applications of Harvard University (CMSA).  MK was supported by NSF Grant DMS-2502708 and the Simons Foundation.  We thank A.~Corti, M.~Elmi, I.~Gaiur, and V.~Golyshev for helpful discussions.
\end{thx}

\section{Beilinson-Hodge implies Gross-Zagier}\label{S2}

Recall that an admissible normal function on a quasi-projective complex manifold $S$ is an extension in the category of admissible VMHS of a Tate object by a pure VHS $\V$.  (For reference the reader may consult \cite[\S2]{KP} and references therein; or see \cite[\S4.2]{GKS} and \cite[\S5]{GK}.)  Via the topological invariant
\begin{equation*}
\ANF_S(\V(p))=\Ext^1_{\AVMHS}(\ZZ(-p),\V)\overset{[\,\cdot\,]}{\underset{\cong}{\longrightarrow}}\Hom_{\MHS}(\ZZ(-p),H^1(S,\V))
\end{equation*}
this is equivalent to a space of Hodge classes.  For simplicity we will write, for any MHS $M$, $\Hg^p(M):=\Hom_{\MHS}(\ZZ(-p),M)$ and $J^p(M):=\Ext^1_{\MHS}(\ZZ(-p),M)$.

When $S$ is a curve and $U\subseteq\overline{S}$ a Zariski open subset, we also denote by $\ANF_U(\V(p))$ the subspace of normal functions nonsingular along $U\setminus S$, so that
\begin{equation*}
\ANF_U(\V(p))	\overset{[\,\cdot\,]}{\underset{\cong}{\longrightarrow}}\Hg^p(\IH^1(U,\V)).
\end{equation*}
Note that unless $U$ is complete, $\IH^1(U,\V)$ is typically not pure.

\subsection{Cycles on Kuga 3-folds}

Let $\Gamma\leq \SL_2(\ZZ)$ be a congruence subgroup not containing $-\text{Id}$.  We are interested in the setting where $\X\overset{\pi}{\to}\B$ is $\E^2\to \Gamma\backslash\UH$, the self-fiber square of the elliptic modular surface, and $\V\subset R^2\pi_*\ZZ$ is the rank-3 generically-transcendental sub-VHS (isomorphic to $\mathrm{Sym}^2\H^1$ of $\E\to\B$).  

Let $\Sigma\subset\B$ be a finite set of CM points and $X_{\Sigma}=\pi^{-1}(\Sigma)$, and $\Xb\to\Bb$ denote the Shokurov compactification.  We have a localization diagram with rows exact sequences of MHS
\begin{equation*}
\xymatrix@C=0.9em{0 \ar [r] & \IH^1(\Bb,\V)\ar [r] \ar @{^(->}[d] & \IH^1(\Bb\setminus\Sigma,\V) \ar [r] \ar @{^(->}[d]& \V|_{\Sigma}(-1) \ar [r] \ar @{^(->}[d] & 0 \\ H^1(X_{\Sigma})(-1) \ar [r] & H^3(\Xb) \ar [r] & H^3(\Xb\setminus X_{\Sigma}) \ar [r] & H^2(X_{\Sigma})(-1) \ar[r] & H^4(\Xb).}	
\end{equation*}
A portion of the $\mathrm{RHom}_{\MHS}$ exact sequence associated to the top row is
\begin{equation}\label{e2.1}
\xymatrix{\ANF_{\Bb\setminus\Sigma}(\V(2)) \ar @{=}[d] \ar [rd]^{\text{sing}_{\Sigma}} \\ \to \Hg^2(\IH^1(\Bb\setminus\Sigma,\V))\ar [r] & \Hg^1(\V|_{\Sigma}) \ar [r]^{\alpha\mspace{40mu}} & J^2(\IH^1(\Bb,\V))\to}
\end{equation}
and so any configuration of divisor classes on $X_{\Sigma}$ with trivial Abel-Jacobi image under $\alpha$ lifts to a normal function.

Now $\IH^1(\Bb,\V)$ is a pure HS of type $(2,0)+(0,2)$, with $S_4(\Gamma)\overset{\cong}{\to}\IH^1(\Bb,\V)^{2,0}_{\CC}$ by $F\mapsto F(\tau)\,dz_1{\wedge} dz_2{\wedge} d\tau$ and identifying $\IH^1$ with $L^2$-cohomology.  (See \cite{Gor,Sc} for the case $\Gamma=\Gamma(N)$, from which the general case follows.) Moreover, if $\tT_m\in \CH^3(\Xb\times\Xb)$ denote the lifts of Hecke correspondences as in \cite[\S4]{Sc}, then any level $4$ relation $\tT=\sum a_m\tT_m$ kills $\IH^1(\Bb,\V)$.  Fix a CM point $\b\in\B$, with  $Z_{\b}\in\CH^1(X_{\b})$ the corresponding primitive CM cycle, in the sense that $[Z_{\b}]$ generates $\V_{\b}\cap\Hg^1(H^2(X_{\b}))\cong \ZZ(-1)$.  Writing $\Sigma:=|\T^*\{\b\}|$ as in \S\ref{S1}, we have $\Z:=\tT^*Z_{\b}\in\CH^1(X_{\Sigma})$ with class in $\ker(\alpha)\subseteq\Hg^1(\V|_{\Sigma})$ by functoriality of Abel-Jacobi.

According to the Beilinson-Hodge conjecture, an integer multiple of the corresponding normal function $\nu\in\ANF_{\Bb\setminus\Sigma}(\V(2))$ should arise from some $\W\in\CH^2(\Xb\setminus X_{\Sigma},1)$.  This $\nu_{\W}$ takes values in the noncompact generalized Jacobians
\begin{equation*}
\nu_{\W}(b)\in J(\V(2)|_b)\cong\frac{\V_{\CC}|_b}{F^2\V_{\CC}|_b+\V_{\ZZ(2)}|_b},
\end{equation*}
given by applying the integral regulator (generalized Abel-Jacobi) to $\W_b$.  Going modulo $\V_{\RR(2)}$ and dividing by $2\pi\ay$ yields a real-analytic section $\r_{\W}$ of $\V_{\RR}^{1,1}$ on $\B\setminus\Sigma$, called the real regulator.  Note that $\W$ may be assumed to be defined over $\QB$ by a spread argument.

\subsection{Geometric Gross-Zagier}\label{S2.2}

There is another ``canonical'' section $\eta$ over $\B$, given by descending
\begin{equation*}
\eta_{\tau}=\frac{dz_1\wedge d\overline{z}_2+d\overline{z}_1\wedge dz_2}{4\Im(\tau)}
\end{equation*}
to fibers of $\pi$, where $\langle\eta_b,\eta_b\rangle:=\int_{X_b}\eta_b\wedge\eta_b=-\frac{1}{2}$ for all $b$. The real-analytic function $\G_{\W}:=\langle\r_{\W},\eta\rangle$ is then, according to Lemmas 3.3, 3.5, and 3.6(b) of \cite{Ke}, a higher Green's function of weight $4$ and level $\Gamma$ with poles along $\Sigma$.  In fact, if for $\sigma\in |\T_m^*\{\b\}|\subseteq\Sigma$ we let ${}^{\sigma}\tT=a_m{}^{\sigma}\tT_m$ denote the restriction of $\tT$ to a correspondence in $\CH^2(X_{\b}\times\X_{\sigma})$, then since ${}^{\sigma}\tT_{m*}\eta_{\b}=m\eta_{\sigma}$ we have
\begin{equation*}
c_{\sigma}=\langle\text{sing}_{\sigma}(\nu_{\W}),\eta_{\sigma}\rangle=\langle{}^{\sigma}\tT^*[Z_{\b}],\eta_{\sigma}\rangle=\langle[Z_{\b}],{}^{\sigma}\tT_*\eta_{\sigma}\rangle=ma_mc_{\b}\,(\in \QQ^{\times}),
\end{equation*}
so that $\G_{\W}=c_{\b}\sum ma_mT_m^*G_{\b}=c_{\b}G_{\T,b}$ by uniqueness.\footnote{This uniqueness corresponds to the fact that higher normal functions with values in an irreducible VHS are determined up to torsion by their singularities \cite[(5.4)]{GK}.}

Finally, \cite[Lem.~3.4]{Ke} shows that $\G_{\W}$ takes values in $\QB\log\QB$ at CM points.  This uses the fact that the restriction of $\W$ to $X_b$ for \emph{any} $b\in \B\setminus\Sigma$ takes the form $\sum(C_i,f_i)$ with $\sum(f_i)=0$, and $\r_{\W}$ is represented by $\sum \log|f_i|\delta_{C_i}$.  When $b$ is CM, these functions and curves are defined over $\QB$, while $[\eta_b]=\sum \gamma_jD_j$ for some $\gamma_j\in \QB$.  We thus get $\G_{\W}(b)=\sum_{i,j}\sum_{p\in C_i\cap D_j}\gamma_j\log|f_i(p)|$ as required by the Gross-Zagier conjecture.

Let us say that the \emph{geometric Gross-Zagier conjecture} holds in weight 4 and level $\Gamma$ if all Green's functions of the form $G_{\T,\b}$ are (up to a rational multiple) ``motivated'' by a cycle $\W$.  Then we have just shown:

\begin{thm}\label{t2.1}
The Beilinson-Hodge conjecture implies the geometric Gross-Zagier conjecture in weight 4 and any level.
\end{thm}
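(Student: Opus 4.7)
The plan is to assemble the ingredients developed immediately above the statement into a single chain of implications. Fixing a level-$4$ relation $\T$ and a CM point $\b\in\B$, I would first form the divisor cycle $\Z:=\tT^{*}Z_{\b}\in\CH^{1}(X_{\Sigma})$ and check that its class lies in $\Hg^{1}(\V|_{\Sigma})\cap\ker(\alpha)$. The Hodge condition follows from the primitivity of $Z_{\b}$, while triviality under $\alpha$ follows from functoriality of Abel--Jacobi together with the fact that a level-$4$ relation $\tT$ annihilates $\IH^{1}(\Bb,\V)$ (and hence $J^{2}(\IH^{1}(\Bb,\V))$) in the sequence \eqref{e2.1}. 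Pulling $[\Z]$ back along the top row then produces an admissible normal function $\nu\in\ANF_{\Bb\setminus\Sigma}(\V(2))$ whose singularity at each $\sigma\in\Sigma$ recovers $[\Z_{\sigma}]$.

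Next I invoke the Beilinson--Hodge conjecture on the smooth quasi-projective threefold $\Xb\setminus X_{\Sigma}$: some nonzero integer multiple of $\nu$ is realized as $\nu_{\W}$ for a higher cycle $\W\in\CH^{2}(\Xb\setminus X_{\Sigma},1)$, which a standard spread argument allows us to take to be defined over $\QB$. Composing the integral regulator with reduction modulo $\V_{\RR(2)}$ yields the real regulator $\r_{\W}$, and pairing against the canonical $(1,1)$-section $\eta$ produces the real-analytic function $\G_{\W}:=\langle\r_{\W},\eta\rangle$ on $\B\setminus\Sigma$.

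Applying \cite[Lems.~3.3, 3.5, 3.6(b)]{Ke} then identifies $\G_{\W}$ as a higher Green's function of weight $4$ and level $\Gamma$ with poles along $\Sigma$; the residue at $\sigma\in|\T_{m}^{*}\{\b\}|$ is computed, exactly as in the displayed equation just before the theorem, to be $ma_{m}c_{\b}$. Uniqueness of higher Green's functions with prescribed singularity data then forces $\G_{\W}=c_{\b}G_{\T,\b}$ up to the rational scalar introduced by Beilinson--Hodge, so $G_{\T,\b}$ is motivated by $\W$ in the required sense. Finally \cite[Lem.~3.4]{Ke} evaluates $\G_{\W}$ at any further CM point $b'\in\B\setminus\Sigma$ as $\sum_{i,j}\sum_{p\in C_{i}\cap D_{j}}\gamma_{j}\log|f_{i}(p)|\in\QB\log\QB$, thereby confirming the geometric Gross--Zagier conjecture.

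The only real obstacle, beyond the assumed input itself, is keeping the bookkeeping clean: one must ensure that the Beilinson--Hodge lift respects the $\QB$-structure (handled by spreading) and that the singularity computation along $\Sigma$ is intrinsic enough for the uniqueness of $G_{\T,\b}$ to yield an honest equality of functions, not merely an equality of leading terms. Both points are granted by the results of \cite[\S3]{Ke}, so the theorem reduces to stringing those lemmas together along the localization sequence \eqref{e2.1}.
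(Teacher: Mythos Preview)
Your proposal is correct and follows essentially the same approach as the paper: the argument in \S\S\ref{S2}.1--2.2 \emph{is} the proof, and you have faithfully reproduced its chain of implications (form $\Z$, check $[\Z]\in\ker(\alpha)$, lift to $\nu$, invoke Beilinson--Hodge to obtain $\W$ over $\QB$, pair $\r_{\W}$ with $\eta$, match singularities via \cite[Lems.~3.3, 3.5, 3.6(b)]{Ke}, and conclude by uniqueness). The only remark is that the geometric Gross--Zagier conjecture as defined here asks only that $G_{\T,\b}$ be motivated by some $\W$; your appeal to \cite[Lem.~3.4]{Ke} is the further step from geometric Gross--Zagier to Gross--Zagier proper, which the paper also records but which is not needed for the theorem as stated.
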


The basic point here is that the normal function is a family of extensions of $\ZZ(-2)$ (type (2,2)) by $\V_b$ (of weight $2$).  When $b$ is CM, a $\V_b$ splits into $\ZZ(-1)$ and a complement, and so the extension has a ``Kummer'' restriction in $\Ext_{\MHS}(\ZZ(-2),\ZZ(-1))$.  If this comes from a cycle defined over $\QB$, then the extension class is the logarithm of an algebraic number.

\begin{rem}
One may extend Theorem \ref{t2.1} to all weights by a similar argument with $\X=\E^{2k-2}$.
\end{rem}

Returning to diagram \eqref{e2.1}, the geometric Gross-Zagier conjecture asks for a cycle $\W$ whose associated normal function has singularity class $[\Z]$.  According to \cite[Prop.~4.1]{Sa}, we have $\text{sing}_{\sigma}(\W)=[\mathrm{Res}_{X_{\sigma}}(\W)]$ for each $\sigma\in\Sigma$.  So it is enough to lift $\tT^*Z_{\b}$ in the diagram
\begin{equation}\label{e2.2}
\xymatrix{\to \CH^2(\Xb\setminus X_{\Sigma},1)_{\QQ} \ar [r]^{\mspace{50mu}\Res_{X_{\Sigma}}} & \CH^1(X_{\Sigma})_{\QQ} \ar [r] \ar @{^(->} [d] & \CH^2(\Xb)_{\QQ} \ar [d]^{c_{\D}} \to \\ & H^2_{\D}(X_{\Sigma},\QQ(1)) \ar [r]^{\tilde{\alpha}} & H^4_{\D}(\Xb,\QQ(2))}
\end{equation}
where we interpret the top row as a portion of the localization sequence \emph{for varieties defined over $\QB$}.  (That is, it consists of algebraic cycles defined over $\QB$.) Since $[\Z]$ already goes to zero in $H^4_{\D}(\Xb,\QQ(2))$ by the analysis in \S\ref{S2}, we see that injectivity of $c_{\D}$ will suffice to produce $\W$:

\begin{cor}
The arithmetic Bloch-Beilinson conjecture for $\Xb$ implies the geometric Gross-Zagier conjecture in weight $4$ and level $\Gamma$.
\end{cor}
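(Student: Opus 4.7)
The plan is essentially a diagram chase in \eqref{e2.2} followed by invocation of the construction from Theorem \ref{t2.1}. First I would observe that, by exactness of the top row of \eqref{e2.2}---a portion of the localization sequence for higher Chow groups of smooth projective $\QB$-varieties---the cycle $\Z = \tT^*Z_\b$ lifts to some $\W \in \CH^2(\Xb \setminus X_\Sigma, 1)_\QQ$ with $\mathrm{Res}_{X_\Sigma}(\W) = \Z$ if and only if its pushforward to $\CH^2(\Xb)_\QQ$ vanishes. So the entire task becomes producing this vanishing of a codimension-$2$ cycle class defined over $\QB$.

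Next I would invoke the arithmetic Bloch-Beilinson conjecture: it asserts that $c_\D : \CH^2(\Xb)_\QQ \to H^4_\D(\Xb, \QQ(2))$ is injective on $\QB$-defined classes. Commutativity of the right-hand square of \eqref{e2.2} rewrites the Deligne class of the pushforward as $\tilde{\alpha}(c_\D(\Z))$, so it suffices to check that this class vanishes in $H^4_\D(\Xb, \QQ(2))$. Splitting the target via the standard extension
$$0 \to J^2(H^3(\Xb)) \to H^4_\D(\Xb, \QQ(2)) \to \Hg^2(H^4(\Xb)) \to 0,$$
one needs to kill the Hodge-class and Abel-Jacobi pieces separately. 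Both reduce to the input already used in Section \ref{S2}: the level-$4$ relation $\tT$ annihilates $\IH^1(\Bb, \V)$. Primitivity of $Z_\b$ (whose class generates $\V_\b \cap \Hg^1(H^2(X_\b))$) confines the pushforward of $\Z$ to the $\V$-isotypic summand, and Hecke-equivariance of $c_\D$ combined with Saito's identification $\mathrm{sing}_\sigma(\W) = [\mathrm{Res}_{X_\sigma}\W]$ from \cite[Prop.~4.1]{Sa} then give the vanishing. Injectivity of $c_\D$ now produces the desired $\W$.

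With $\W$ in hand, I would appeal verbatim to the construction of \S\ref{S2.2}: the real regulator $\r_\W$ paired against the canonical section $\eta$ yields the real-analytic function $\G_\W = \langle \r_\W, \eta \rangle$, which by \cite[Lem.~3.3, 3.5, 3.6(b)]{Ke} is a higher Green's function of weight $4$ and level $\Gamma$ with the correct polar data along $\Sigma$, so that $\G_\W = c_\b G_{\T,\b}$ up to a nonzero rational by uniqueness. Then \cite[Lem.~3.4]{Ke} finishes by furnishing $\QB \log \QB$ values on CM points, which is exactly geometric Gross-Zagier. The main obstacle is the middle step---carefully upgrading the pure Hodge-theoretic vanishing $\alpha([\Z]) = 0$ of \eqref{e2.1} to the Deligne-cohomological vanishing $\tilde{\alpha}(c_\D(\Z)) = 0$, confirming that no spurious fundamental class in $\Hg^2(H^4(\Xb))$ outside the $\V$-component is introduced by $c_\D$. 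This ultimately rests on primitivity of $Z_\b$ together with the Hecke-equivariance and functoriality of the Deligne cycle class map.
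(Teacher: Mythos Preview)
Your approach is essentially the paper's own: reduce geometric Gross--Zagier to lifting $\Z$ through the top row of \eqref{e2.2}, observe this amounts to vanishing of the pushforward in $\CH^2(\Xb)_\QQ$, and use injectivity of $c_\D$ to reduce that to the Deligne-cohomological vanishing $\tilde\alpha(c_\D(\Z))=0$, which follows from the analysis in \S\ref{S2}. You are in fact more explicit than the paper in decomposing $H^4_\D$ into its Hodge-class and Abel--Jacobi pieces and checking each separately.

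One small correction: your invocation of \cite[Prop.~4.1]{Sa} is misplaced. Saito's identification $\mathrm{sing}_\sigma(\nu_\W)=[\mathrm{Res}_{X_\sigma}\W]$ plays no role in the vanishing of $\tilde\alpha(c_\D(\Z))$; rather (as the paper uses it in the paragraph preceding the Corollary) it is what justifies the reduction at the outset---it guarantees that once you \emph{have} a lift $\W$ with $\mathrm{Res}_{X_\Sigma}(\W)=\Z$, the associated normal function $\nu_\W$ has singularity class $[\Z]$, so that the \S\ref{S2.2} construction applies. The actual vanishing in $H^4_\D$ comes from primitivity of $[Z_\b]\in\V_\b$ (which, via the localization diagram at the top of \S2.1, forces the Hodge-class piece to die) together with Hecke-equivariance and $\tT_*\IH^1(\Bb,\V)=0$ (which kills the Abel--Jacobi piece). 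With the Saito citation relocated to its proper role, your argument is complete and matches the paper's.
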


Though this injectivity is \emph{conjectured} for any smooth quasi-projective $\Xb$ over $\QB$, we only know it in special cases.  (Here we should remind the reader that $\Xb$ depends on $\Gamma$.) For instance, if $\Xb$ is a \emph{rational} threefold, it follows from the projective bundle formula.  Indeed, rationality of $\Xb$ is shown in \cite[App.~D]{BF} for $\Gamma=\SL_2(\ZZ)$; however, we shall take a slightly different approach in the next section to get a larger collection of examples.

\section{Mirror families of $K3$ surfaces}\label{S3}

Let $\Y\overset{f}{\to}\C$ be a family of (generically) Picard rank 19 $K3$ surfaces over a curve, and $\Yb\to\Cb$ a compactification with smooth total space.  Write $\H^2=\H^2_{\text{tr}}\oplus\H^2_{\text{alg}}$ for the orthogonal decomposition of $R^2f_*\QQ$ (as a VHS) into generically transcendental and generically algebraic parts.

\subsection{Modular families}\label{S3.1}
We first make precise the version of Gross-Zagier alluded to at the end of the introduction.

\begin{defn}
The $K3$ family is said to be \emph{modular} if there exist:
\begin{itemize}[leftmargin=0.5cm]
\item a congruence subgroup $\Gamma\leq\SL_2(\ZZ)$, with corresponding Kuga threefold $\Xb\to\Bb$ and $\QQ$-VHS $\V$ over $\B$ (as in \S\ref{S2}, but $\otimes\QQ$);
\item a finite extension $\hat{\Gamma}\leq\PSL_2(\RR)$ of $\PP\Gamma$, such that $\C\subset\hat{\Gamma}\backslash\UH$ is the complement of the orbifold points; and
\item a correspondence $\Theta\in\CH^3(\Xb\times\Yb)$ over the natural quotient map $\overline{\theta}\colon\Bb\to\Cb$, whose action on cohomology induces an isomorphism $\theta^*\H^2_{\text{tr}}\cong\V$.
\end{itemize}
More precisely, there is a family $\K\to\C$ of Kummer $K3$s associated to $\Y$, and a rational map $\Yb\overset{\kappa}{\dashrightarrow}\Kb$ over the identity on $\Cb$, and a rational map $\Xb\overset{\tilde{\Theta}}{\dashrightarrow}\Kb$ over $\theta$ ``undoing the Kummer construction'' as in \cite[\S5]{DHNT} together with a fiberwise isogeny; our $\Theta$ is the ``composition'' of $\tilde{\Theta}$ with the inverse of $\kappa$.
\end{defn}

In all the examples we know, $\Y$ is a family of Shioda-Inose $K3$s and the generically 2:1 map $\kappa$ quotients by the Nikulin involution and resolves singularities.

By a \emph{Fricke-type involution}, we will mean an element of $\PSL_2(\RR)$ which is $\PSL_2(\ZZ)$-conjugate to $W_N=\begin{psmatrix}0&1/\sqrt{N}\\-\sqrt{N}&0\end{psmatrix}$ for some $N$.

\begin{defn}
We will call a modular family \emph{special modular} if
\begin{itemize}[leftmargin=0.5cm]
\item $\hat{\Gamma}$ has a normal subgroup $\Gamma_0\leq \PSL_2(\ZZ)$, with quotient $\g:=\hat{\Gamma}/\Gamma_0\cong(\ZZ/2\ZZ)^{\ell}$ generated by $\ell$ Fricke-type involutions;
\item sections of the extended Hodge bundle $\H^{2,0}_{\text{tr},e}$ correspond to $M_2(\hat{\Gamma},\chi)$ and holomorphic $3$-forms on $\Yb$ to $S_4(\hat{\Gamma},\chi)$, where $\chi\colon \g\to\{\pm 1\}$ sends the involutions to $-1$;\footnote{$\Gamma_0$ plays the role of $\Gamma$ in the last paragraph of \S\ref{S1}.} and
\item the base-change $\V_0$ of $\H^2_{\text{tr}}$ to $\B_0=\Gamma_0\backslash\UH$ admits a section $\eta_0$ of $(\V_0)^{1,1}_{\RR}$ with $\langle\eta_0,\eta_0\rangle\equiv-\frac{1}{2}$, on which $\mathrm{Aut}(\B_0/\C)\cong \g$ acts through $\chi$.
\end{itemize}
\end{defn}

In practice $\ell$ is usually $1$, but we at least need to allow for it to be $2$.

\begin{example}
Group data occurring for $K3$ families in \S\ref{S3.3} include:
\begin{itemize}[leftmargin=0.7cm]
\item [(a)] $\hat{\Gamma}=\Gamma_0(N)^{+N}$ is generated by $\Gamma_0=\Gamma_0(N)$ and $W_N$, which always normalizes $\Gamma_0(N)$.\footnote{$\Gamma_0(N)^{+N}$ is denoted $\Gamma_0^*(N)$ in some literature (e.g.~\cite{GoZ}).}
\item [(b)] $\Gamma_0=\Gamma_0(6)$ is normalized by $\beta_3=\begin{psmatrix}\sqrt{3} & -\frac{1}{\sqrt{3}} \\ 4\sqrt{3} & -\sqrt{3}\end{psmatrix}$ (but \emph{not} $W_3$), which is the conjugate of $W_3$ by $\begin{psmatrix}1&0\\-3&1\end{psmatrix}$;  by $(\hat{\Gamma}=)\,\Gamma_0(6)^{+3}$ we mean the group generated by $\Gamma_0$ and $\beta_3$.  We can also take $\hat{\Gamma}=\Gamma_0(6)^{+3+6}$ to be the group generated over $\Gamma_0$ by $\beta_3$ and $W_6$; this gives $\ell=2$.
\item [(c)] Let $\widetilde{\Gamma}_0(8)\,(=\Gamma_0)$ denote the conjugate of $\Gamma_0(8)$ by $\begin{psmatrix}1&-1\\2&-1\end{psmatrix}$; this is normalized by $W_4$ and together they generate $\hat{\Gamma}=\widetilde{\Gamma}_0(8)^{+4}$.
\end{itemize}
For $\Gamma_0=\Gamma_0(N)$, we can always take $\Gamma=\Gamma_0(2N)\cap \Gamma(2)$.
\end{example}

Now consider a CM point $\b_0\in \B_0$ with image $\c\in \C$, together with a relation $\T$ on $S_4(\hat{\Gamma},\chi)$.  The higher Green's function $\hat{G}_{\T,\c}:=\sum_{\gamma\in \g} \chi(\gamma)\gamma^*G_{\T,\b_0}$ is anti-invariant under the Fricke-type involutions, and descends only to the quotient of $\B_0$ by $\ker(\chi)\leq \g$.  Write $\Sigma\subset\C$ for the image of $|\T^*\{\b_0\}|$ in $\C$.

The \emph{Gross-Zagier conjecture in weight $4$ and level $\hat{\Gamma}$} says that $\hat{G}_{\T,\c}$ takes $\QB\log\QB$ values at CM points outside its singularities. This is implied by the corresponding geometric Gross-Zagier conjecture, which says that $\hat{G}_{\T,\c}$ arises (up to a rational multiple) from a cycle $\fW\in\CH^2(\Yb\setminus Y_{\Sigma},1)$ by base-changing the latter to $\B_0$ and pairing its real regulator with $\eta_0$.

For the rest of this note we will be concerned only with the case where $S_4(\Gamma_0,\chi)=\{0\}$, so that no relation $\T$ is needed and $\hat{G}_{\c}=\sum_{\gamma\in \g}\chi(\gamma)G_{\gamma(\b_0)}$ is the Green's function we want to realize by a higher cycle.  In most cases this takes the form $G_{\b_0}-G_{\gamma(\b_0)}$.

\subsection{Cycles on $K3$ families}\label{S3.2}

Setting the contents of \S\ref{S3.1} aside for the moment, suppose that $\Cb\cong\PP^1$ (with coordinate $t$) and $\Yb$ is a \emph{rational} 3-fold.  Suppose that $t_0\in\C$ is a CM point (i.e.~$Y_{t_0}$ has Picard rank 20) and that $\fZ_{t_0}\in\CH^1(Y_{t_0})$ has class generating  $\Hg^1(\H^2_{\text{tr}}|_{t_0})\cong\ZZ$.

We want to argue that a multiple of the CM cycle $\imath^{t_0}_*\fZ_{t_0}$ vanishes in $\CH^2(\Yb)$.  \emph{For this we assume additionally that $H^3(\Yb)=\{0\}$.}  Then $\CH^2(\Yb)_{\QQ}$ is isomorphic to the space of $\QQ$-Hodge classes in $H^4(\Yb)$.  In the diagram
\begin{equation*}
\xymatrix{\CH^2(\Yb\setminus Y_{t_0},1)_{\QQ} \ar [r]^{\mspace{20mu}\Res_{Y_{t_0}}} & \CH^1(Y_{t_0})_{\QQ} \ar [r]^{\imath^{t_0}_*} \ar [d]^{\cong}_{[\,\cdot\,]} & \CH^2(\Yb)_{\QQ} \ar  [d]^{[\,\cdot\,]}_{\cong} \\ & \Hg^1(Y_{t_0})_{\QQ} \ar [r]^{\imath^{t_0}_*} & \Hg^2(\Yb)_{\QQ}}
\end{equation*}
we claim that $\imath^{t_0}_*[\fZ_{t_0}]$ is zero in $\Hg^2(\Yb)_{\QQ}$.  Suppose otherwise: then there is a dual Hodge class $[\D]\in\Hg^1(\Yb)_{\QQ}$, with $1=\langle[\D],\imath^{t_0}_*[\fZ_{t_0}]\rangle_{\Yb}=\langle\imath_{t_0}^*[\D],[\fZ_{t_0}]\rangle_{Y_{t_0}}$. But  $\imath^*_{t_0}[\D]$ remains algebraic under parallel translation to nearby non-CM fibers (viz., $\imath^*_{t_1}[\D]$), hence belongs to $\H^2_{\text{alg}}|_{t_0}$ and must be orthogonal to $[\fZ_{t_0}]\in\H^2_{\text{tr}}|_{t_0}$, a contradiction. Thus $\imath^{t_0}_*\fZ_{t_0}=0$ and we obtain a cycle $\fW\in\CH^2(\Yb\setminus Y_{t_0},1)$ bounding (via $\Res_{Y_{t_0}}$) on a multiple of $\fZ_{t_0}$.  

Next, assume only that $\Yb$ is rational and special modular.  Then the argument of the last paragraph still gives that $\imath^{t_0}_*[\fZ_{t_0}]=0$, and the argument around diagram \eqref{e2.2} reduces triviality of $\imath^{t_0}_*\fZ_{t_0}$ to that of its Abel-Jacobi image in $J^2(\IH^1(\Cb,\H^2_{\text{tr}}))$.  But the parabolic cohomology $\IH^1(\Cb,\H^2_{\text{tr}})$ is of $(3,0)+(0,3)$ type by virtue of injecting into $\IH^1(\Bb,\V)$ under $\Theta^*$, and of $(1,2)+(2,1)$ type by virtue of rationality.  So it is zero, and we get $\fW$ as before, leading to the

\begin{thm}
If $\Yb\to\Cb\subset\hat{\Gamma}\backslash\UH$ is a special modular family of $K3$ surfaces with rational total space, then the geometric Gross-Zagier conjecture in weight 4 and level $\hat{\Gamma}$ holds.
\end{thm}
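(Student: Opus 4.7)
The plan is to take the cycle $\fW$ produced by the argument immediately preceding the theorem and show that its real regulator, paired with $\eta_0$, recovers $\hat{G}_{\c}$ up to a rational scalar. Recall that that argument already gives the existence of $\fW\in \CH^2(\Yb\setminus Y_{t_0},1)_{\QQ}$ with $\Res_{Y_{t_0}}(\fW)$ a nonzero rational multiple of $\fZ_{t_0}$, using (i) $H^3(\Yb)=0$ and rationality to identify $\CH^2(\Yb)_{\QQ}\cong \Hg^2(\Yb)_{\QQ}$, together with the transcendental/algebraic orthogonality showing $\imath^{t_0}_*[\fZ_{t_0}]=0$ in $\Hg^2(\Yb)_{\QQ}$; and (ii) the collision of the $(3,0)+(0,3)$ constraint on $\IH^1(\Cb,\H^2_{\text{tr}})$ (coming from injection into $\IH^1(\Bb,\V)$ under $\Theta^*$) with the $(1,2)+(2,1)$ constraint from rationality, which kills the Abel-Jacobi obstruction in $J^2(\IH^1(\Cb,\H^2_{\text{tr}}))$. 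A spread argument lets us assume $\fW$ is defined over $\QB$.

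Next I would base-change $\fW$ along $\theta\colon \B_0\to \C$, producing a higher cycle whose real regulator $\r_\fW$ is a real-analytic section of the $(1,1)_{\RR}$-part of the base-changed transcendental VHS; the special-modularity identification $\theta^*\H^2_{\text{tr}}\cong \V_0$ turns this into a section of $(\V_0)^{1,1}_{\RR}$ on $\B_0\setminus \rho^{-1}(\Sigma)$. Form the candidate
\begin{equation*}
\hat{\G}_\fW\;:=\;\langle \r_\fW,\eta_0\rangle.
\end{equation*}
Applying Lemmas 3.3, 3.5, and 3.6(b) of \cite{Ke} exactly as in \S\ref{S2.2} shows that $\hat{\G}_\fW$ is a higher Green's function of weight 4 on $\B_0$ whose logarithmic singularities are supported on the preimage of the residue divisor of $\fW$, with residue weights computed as in \S\ref{S2.2} from the pushforward of $\eta_0$ by the isogeny piece of $\Theta$. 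Since $\fW$ was pulled back from $\C$, its residue is supported at $t_0$, and its preimage in $\B_0$ is exactly the $\g$-orbit of $\b_0$.

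The critical step is the transformation law: $\r_\fW$ is $\g$-invariant (being pulled back from $\C$), while by definition of special modularity $\eta_0$ transforms through $\chi$. Consequently $\hat{\G}_\fW$ is anti-invariant under each Fricke-type involution generating $\g$, matching the transformation behavior of $\hat{G}_{\c}=\sum_{\gamma\in \g}\chi(\gamma)G_{\gamma(\b_0)}$. Combined with the singularity analysis above, the uniqueness of higher Green's functions with prescribed singularities and $\chi$-equivariance (cf.\ the uniqueness statement from \cite[(5.4)]{GK} used in \S\ref{S2.2}) forces $\hat{\G}_\fW=\lambda\hat{G}_{\c}$ for some $\lambda\in \QQ^\times$. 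The $\QB\log \QB$ values at CM points $b\in \B_0\setminus \rho^{-1}(\Sigma)$ are then a verbatim transcription of the end of \S\ref{S2.2}: over a CM fiber, $\fW|_{Y_b}=\sum(C_i,f_i)$ with $C_i,f_i$ defined over $\QB$, $[\eta_0|_b]=\sum \gamma_j D_j$ with $\gamma_j\in\QB$ by the CM-splitting of $(\V_0)_b$, and so $\hat{\G}_\fW(b)=\sum_{i,j}\sum_{p\in C_i\cap D_j}\gamma_j\log|f_i(p)|\in \QB\log\QB$ by \cite[Lem.~3.4]{Ke}.

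The main obstacle I expect is checking, along the correspondence $\Theta$ (composed with the inverse of the Kummer map $\kappa$ and the fiberwise isogeny), that the real regulator of the base-changed $\fW$ really is identified with a section of $(\V_0)^{1,1}_{\RR}$ on which the $\g$-invariance and the pairing with $\eta_0$ behave as claimed; once that compatibility is in hand the remaining steps are direct adaptations of \S\ref{S2.2} plus the extra $\chi$-equivariance furnished by special modularity.
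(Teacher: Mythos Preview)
Your proposal is correct and follows essentially the same route as the paper: form $\hat{\G}_\fW=\langle \r_\fW,\eta_0\rangle$ on $\B_0$, verify it is a higher Green's function via \S\ref{S2.2}, match singularities and $\g$-behavior with $\hat{G}_{\c}$, and invoke uniqueness. Your explicit justification of the $\chi$-equivariance (from $\g$-invariance of $\r_\fW$ against $\chi$-equivariance of $\eta_0$) spells out what the paper compresses into ``same behavior under $\g$''; conversely, the ``obstacle'' you flag is handled in the paper simply by applying \S\ref{S2.2} to $\Theta^*\fW$ on $\B$ first and then descending to $\B_0$.
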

\begin{proof}
Let $\hat{\G}_{\fW}$ be the real-analytic function defined by pairing the pullback of $\r_{\fW}$ to $\B_0$ with the section $\eta_0$ of $(\V_0)^{1,1}_{\RR}$.  Applying the first paragraph of \S\ref{S2.2} to $\Theta^*\fW$ shows that its pullback to $\B$ is a higher Green's function; so this is true on $\B_0$ as well.  Since this has the same singularities and behavior under $\g$ as $\hat{G}_{t_0}$, they are equal (up to a rational multiple) by uniqueness.
\end{proof}

\subsection{The modular mirror families}\label{S3.3}

Let $\vf\in \ZZ[x^{\pm1},y^{\pm1},z^{\pm1}]$ be a Minkowski polynomial with (reflexive) Newton polytope $\Delta\subset\RR^3$ \cite{CC+1}.  The level hypersurfaces $Y_t^*:=\{1-t\vf=0\}\subset\GG_m^3$ have $K3$ compactifications $Y_t$ for general $t$, with boundary $Y_t\setminus Y_t^*$ a union of smooth rational curves.  Such $\vf$ are classified in \cite{CC+2}, with the resulting $H^2_{\text{tr}}(Y_t)$ giving 165 distinct VHS, 23 of which have rank $3$ (so that $Y_t$ has Picard rank 19 for very general $t$).  Of these, $15$ arise as mirrors of minimal (i.e., Picard rank $1$) Fano 3-folds, with the polynomials $\vf$ listed in \cite[Table 1]{HLP}; while 8 more are mirror to ``$G$-minimal'' Fano 3-folds of Picard ranks $2$, $3$, and $4$ \cite{Ga1,Ga2}.

By $\PD$, we shall mean the smooth toric 3-fold associated to a maximal projective triangulation of the polar $\Delta^{\circ}$. The Zariski closures $\overline{Y^*_t}\subset\PD$ intersect in a base locus described by the polynomials $\vf_F$, as $F$ ranges over faces of $\Delta$. Since $\vf$ is a Minkowski polynomial, this base locus is a union of smooth rational curves.
The key point is that the total space $\Yb$ is constructed from $\PD$ by iteratively blowing up along (successive proper transforms of) components of this locus.  Hence $\Yb$ is rational.

Furthermore, as partially documented in \cite{Go,GoZ} and \cite{Ga1,Ga2}, the 23 families just described are special modular.  The period map presents each $\H^2_{\text{tr}}$ as a finite basechange of the canonical VHS over $\Gamma_0(N)^+\backslash\UH$ for some $N$, where $\Gamma_0(N)^+$ is the \emph{full} normalizer of $\Gamma_0(N)$ in $\PSL_2(\RR)$.  Outside of $9$ cases already done in \cite{GoZ} (with $\hat{\Gamma}=\Gamma_0(N)^{+N}$), the main difficulty is in checking that this cover is of the form $\hat{\Gamma}\backslash\UH$, and that it has a cover of the form $\Gamma_0\backslash\UH$ over which $\H^2_{\text{tr}}$ admits a weight-one ``symmetric square root''.

\begin{rem}
The cases $(N,d)=(1,1)$ and $(1,2)$ in \cite{GoZ} do not appear in our table because the 2:1 covers given by $\sqrt{t}$ and $\sqrt{1-1728t}$ both involve an order-2 ramification at an order-3 orbifold point on the $j$-line, so are not quotients of $\UH$.  Though the relevant $K3$ families are not given by a Minkowski polynomial (the Newton polytopes of $\vf=\tfrac{(1+x+y+z)^6}{xyz}$ and $\tfrac{(1+x+y)^6}{xy^2z}+z$ from \cite{HLP} are not reflexive), they still have rational total spaces and the construction of $\fW$ in \S\ref{S3.2} still works.  But strictly speaking, the interpretation of the resulting real-analytic function on a 2:1 or 4:1 cover of the $j$-line as a higher Green's function doesn't work.
\end{rem}

In Table 1 below, we record the Fano variety $\mathrm{X}$ to which each $K3$ family is mirror, the Laurent polynomial $\vf$, and the subgroup $\hat{\Gamma}$ of $\PSL_2(\RR)$.  To obtain $\Gamma_0$ from the latter, just remove the ``${+}N$'' or ``${+}N{+}M$'' exponent.  The notation for groups comes from the database \cite{CP}; something like ``$12\text{F}^1$'' means that the group has level $12$ and $12\text{F}^1\backslash\UH$ has genus $1$.

\begin{table}[h!]\label{tab1}
    \centering
    \begin{tabular}{|c|c|c|}
\hline
      $\mathrm{X}$ & $\vf$  & $\hat{\Gamma}$  \\
     \hline
        (2,1) & $\tfrac{(1+x+y+z)^4}{xyz}$ & $\Gamma_0(2)^{+2}$ \\
     \hline
        (3,1) & $\tfrac{(1+x)^2(1+y+z)^3}{xyz}$ & $\Gamma_0(3)^{+3}$ \\
     \hline
        (4,1) & $\tfrac{(1+x)^2(1+y)^2(1+z)^2}{xyz}$ & $\Gamma_0(4)^{+4}$ \\
     \hline
        (5,1) & $\tfrac{(1+x+y+z+xy+xz+yz)^2}{xyz}$ & $\Gamma_0(5)^{+5}$ \\
     \hline
        (6,1) & $\tfrac{(1+x+z)(1+x+y+z)(1+z)(y+z)}{xyz}$ & $\Gamma_0(6)^{+6}$ \\
     \hline
        (7,1) & $\tfrac{(1+x+y+z)^2}{x}+\tfrac{(1+z)^2(1+y+z)(1+x+y+z)}{xyz}$ & $\Gamma_0(7)^{+7}$ \\
     \hline
        (8,1) & $\tfrac{(1+x)(1+y)(1+z)(1+x+y+z)}{xyz}$ & $\Gamma_0(8)^{+8}$ \\
     \hline
        (9,1) & $\tfrac{(x+y+z)(x+xz+xy+xyz+z+y+yz)}{xyz}$ & $\Gamma_0(9)^{+9}$ \\
     \hline
        (11,1) & $3+z+\tfrac{xy}{z}+\tfrac{(1+z)(1+x+y)(xy+z)}{xyz}$ & $\Gamma_0(11)^{+11}$ \\
     \hline
        (2,2) & $z+\tfrac{(1+x+y)^4}{xyz}$ & $(4\text{C}^0)^{+2}$ \\
     \hline
        (3,2) & $z+\tfrac{(1+x+y)^3}{xyz}$ & $(6\text{C}^0)^{+3}$ \\
     \hline
        (4,2) & $z+\tfrac{(1+x)^2(1+y)^2}{xyz}$ & $\tilde{\Gamma}_0(8)^{+4}$ \\
     \hline
        (5,2) & $x+y+z+\tfrac{1}{x}+\tfrac{1}{y}+\tfrac{1}{z}+xyz$ & $(10\text{A}^1)^{+5}$ \\
     \hline
        (3,3) & $z+y+\tfrac{(1+x)^2}{xyz}$ & $(9\text{A}^1)^{+3}$ \\
     \hline
        (2,4) & $x+y+z+\tfrac{1}{xyz}$ & $(8\text{A}^1)^{+2}$ \\
     \hline
        2-6 & see \#3873.2 in \cite{CC+2} & $(3\text{C}^0)^{+1}$? \\
     \hline
        2-12 & see \#1193 in \cite{CC+2} & $\Gamma_0(10)^{+10+5}$? \\
     \hline
        2-21 & $x+y+z+\tfrac{x}{z}+\tfrac{y}{z}+\tfrac{x}{y}+\tfrac{y}{x}+\tfrac{z}{y}+\tfrac{1}{x}+\tfrac{1}{z}$ & $\Gamma_0(14)^{+14+7}$? \\
     \hline
        2-32 & $x+y+z+\tfrac{1}{x}+\tfrac{1}{y}+\tfrac{1}{xyz}$ & $(6\text{B}^1)^{+1}$? \\
     \hline
        3-1 & see \#3873.4 in \cite{CC+2} & $\Gamma_0(6)^{+6+3}$ \\
     \hline
        3-13 & $x+y+z+\tfrac{y}{z}+\tfrac{x}{y}+\tfrac{z}{x}+\tfrac{1}{x}+\tfrac{1}{y}+\tfrac{1}{z}$ & $\Gamma_0(15)^{+15+5}$? \\
     \hline
        3-27 & $x+y+z+\tfrac{1}{x}+\tfrac{1}{y}+\tfrac{1}{z}$ & $(12\text{F}^1)^{+6+3}$ \\
     \hline
        4-1 & $(1+x+y+z)(1+\tfrac{1}{x}+\tfrac{1}{y}+\tfrac{1}{z})$ & $\Gamma_0(6)^{+3}$ \\
     \hline
    \end{tabular}\normalsize
    \caption{23 mirror special modular $K3$ families.}
    \label{tab_2}
\end{table}

For $\mathrm{X}$, we give either the $(N,d)$ data (the \emph{index} $d$ of $-K_{\mathrm{X}}$ in $H^2(\mathrm{X},\ZZ)$ and the \emph{level} $N=(-K_X)^3/2d^2$) or the Mori-Mukai data ($\rho$-$**$ where $\rho$ is the Picard rank and $**$ its position in the list).  It seems standard to call the $(N,1)$-Fanos\footnote{These are described explicitly in several of the references.  Note that the mirror of $V_{12}$ is the family of $K3$s underlying Ap\'ery's irrationality proof for $\zeta(3)$ (e.g.~see \cite{Ke2}); mirrors of $V_{10}$, $V_{12}$, $V_{14}$, $V_{16}$, and $V_{18}$ have been studied in detail in \cite{GKS}.} $V_{2N}$ and the $(N,2)$-Fanos $\mathcal{B}_N$, while $(3,3)$ and $(2,4)$ are the quadric and $\PP^3$ respectively; in the Mori-Mukai list lie familiar varieties like $\mathrm{MM}_{3\text{-}27}=(\PP^1)^3$ and $\mathrm{MM}_{4\text{-}1}=$ the hypersurface of multidegree (1,1,1,1) in $(\PP^1)^4$.  See \cite{CCGK} for a full accounting.  

The Laurent polynomials are taken from \cite{HLP,CC+2} and are \emph{not unique}.  Morally it is the LG-model $\overline{f}\colon \Yb\to\PP^1_t$ mirror to $\mathrm{X}$ that is unique, and for each torus chart $\GG_m^3\overset{\rho}{\to}\Yb\setminus Y_0$ the restriction $(1/\overline{f})\circ \rho$ defines a Laurent polynomial $\vf$ in the torus coordinates.  Each torus chart is mirror to a toric specialization $\mathrm{X}\rightsquigarrow\PP_{\Delta^{\circ}}$, with $\Delta^{\circ}$ the polar of the Newton polytope of $\vf$.  See \cite{Co} for a more detailed explanation.

The upshot is of course the following

\begin{cor}
The geometric Gross-Zagier conjecture holds\footnote{The question marks in the table are educated guesses, and we do not have full proofs in those cases.  So this result is conjectural in those 5 cases.} in weight $4$ and level $\hat{\Gamma}$ for each of the 23 groups appearing in Table 1.
\end{cor}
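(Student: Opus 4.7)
The plan is to apply the preceding theorem to each of the 23 entries of Table~\ref{tab1}, reducing the corollary to verifying, row by row, two hypotheses: (i) rationality of the total space $\Yb$, and (ii) special modularity with the listed group $\hat{\Gamma}$.

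Rationality is essentially uniform across the table.  For each $\vf$ that is Minkowski with reflexive Newton polytope $\Delta$, the base locus of the pencil $\{1-t\vf=0\}\subset\PD$ is a disjoint union of smooth rational curves, and $\Yb$ is built from the smooth toric threefold $\PD$ by iterated blowups along (successive proper transforms of) these curves; since $\PD$ is rational and blowing up a smooth rational variety along a smooth curve preserves rationality, $\Yb$ is rational.

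For special modularity I would split the 23 rows into three groups.  The nine $(N,1)$-rows with $\hat{\Gamma}=\Gamma_0(N)^{+N}$ ($N\in\{2,\ldots,9,11\}$) are directly covered by the period and Kuga-threefold computations of \cite{Go,GoZ}: in each, the classifying map presents $\C$ as the complement of orbifold points in $\Gamma_0(N)^{+N}\backslash\UH$, and $\eta_0$ is pulled back, via the Shioda-Inose/Kummer correspondence $\Theta$ of \S\ref{S3.1}, from the canonical section of \S\ref{S2.2}.  For the nine unmarked rows of higher Fano index or with non-standard Fricke normalizers, I would identify $\hat{\Gamma}$ by matching the $q$-expansion of the holomorphic period against the database \cite{CP}, locate the Fricke-type generators (as in cases (b) and (c) of the Example in \S\ref{S3.1}), and verify $\chi$-anti-invariance of $\eta_0$ by checking that these generators act by $-1$ on the rank-one Hodge piece $\H^{2,0}_{\text{tr}}$---equivalently, that holomorphic $3$-forms on $\Yb$ correspond to $S_4(\hat{\Gamma},\chi)$.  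The five question-marked rows are handled conditionally on the stated identification of $\hat{\Gamma}$, and the same procedure applies verbatim (as acknowledged in the footnote to the corollary).

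The hard part will be the second group within (ii), specifically the cases (b) and (c) where $\Gamma_0$ is a $\PSL_2(\RR)$-conjugate of $\Gamma_0(N)$ rather than $\Gamma_0(N)$ itself, and $\hat{\Gamma}$ is generated by the non-standard Fricke-type elements $\beta_3$ or $W_4$ (and possibly $W_6$).  Carefully tracking the isogeny $\kappa$ from $\Yb$ to its Kummer quotient, together with the conjugator that relates $\Gamma_0$ to the honest congruence subgroup $\Gamma$ used in \S\ref{S2}, is where the real bookkeeping lies.  Once the period data is pinned down and $\eta_0$ is constructed with the correct normalization $\langle\eta_0,\eta_0\rangle\equiv-\tfrac{1}{2}$ and $\g$-action through $\chi$, the preceding theorem produces the cycle $\fW$ and identifies $\hat{\G}_{\fW}$ with $\hat{G}_{\c}$ up to a rational multiple, completing the proof in all unmarked cases of Table~\ref{tab1}.
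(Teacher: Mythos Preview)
Your proposal is correct and follows essentially the same route as the paper: the corollary is presented there as the ``upshot'' of the discussion in \S\ref{S3.3}, which establishes rationality of $\Yb$ uniformly via the iterated-blowup-of-$\PD$ argument and then invokes special modularity (documented in \cite{Go,GoZ,Ga1,Ga2} for the $(N,1)$ rows, sketched or outsourced for the remaining unmarked rows, and left conjectural for the five question-marked entries) to feed into the theorem of \S\ref{S3.2}. Your three-way split of the rows and identification of the non-standard Fricke normalizers as the delicate step match the paper's own emphasis exactly.
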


We conclude with some more detailed explanations in several cases.  Note that in \emph{every} case, $t=0$ is a maximal unipotent monodromy point, and  the discriminant locus takes the form $\{0\}\cup \mathcal{A}$.

\begin{example}
(a) In the first 9 cases, $\mathcal{A}$ consists of one finite point and $\infty$ ($N=2,3,4$), two finite points ($N=5,6,7,8,9$), or four finite points ($N=11$) \cite{GoZ}.  Order three Picard-Fuchs ordinary differential equations and three-dimensional integral monodromy representations in each of these cases are computed in \cite{DoKo} using the Griffiths-Dwork algorithm (op cit., Table 7) and geometric variation of local systems approach (op cit., Table 8), respectively. The 2:1 cover of $\Cb\cong\PP^1$ branched along $\mathcal{A}$ is $\Bb_0^{(N,1)}$; this ``undoes'' the quotient by the Fricke involution $W_N$ and supports a family of elliptic curves.  It has genus 0 except for $N=11$.

(b) In the next six cases, the $(N,d)$ $K3$ family is the basechange of the $(N,1)$ $K3$ family by $t\mapsto t^d$.  The fiber product of the Fricke cover and this cyclic one yields $\Bb_0^{(N,d)}$, which has genus 0 and genus 1 in three cases each.  The groups in this part of the table are computed by examining the basechange of the elliptic modular surface over $\Bb_0^{(N,d)}\twoheadrightarrow\Bb_0^{(N,1)}$.  This necessarily produces subgroups of $\Gamma_0(N)$ which are normalized by $W_N$, like $\tilde{\Gamma}_0(8)$ in the $(4,2)$ case.  (In the other cases the group is not conjugate to $\Gamma_0(Nd)$, which has either the wrong index or wrong cusp-widths.)

(c) Family 3-1 has $\mathcal{A}=\{\tfrac{1}{36},\tfrac{1}{4},\infty\}$, with  a simple reflection (conifold monodromy) at the two finite points and monodromy exponents $(\tfrac{1}{2},1,\tfrac{1}{2})$ at $\infty$. Families (6,1) (Ap\'ery), 3-27 (Fermi), and 4-1 (Verrill) are all basechanges of this, by the maps $t\mapsto \tfrac{t}{(t+1)^2}$, $t\mapsto t^2$, and $t\mapsto\tfrac{36t}{(8t+1)^2}$; see \cite[\S3.2]{BKV} as well as \cite[\S10]{DK} and \cite[\S5]{Ke2}.  The first and last of these maps are quotients by the Fricke-type involutions $\beta_3$ and $W_6$, and have $\Gamma_0(6)\backslash\UH$ as a fiber product; the fiber product of all three is $12\text{F}^1\backslash\UH$ (with genus 1 and 4 cusps of widths $2,4,6,12$).
\end{example}

\section{Multiparameter and higher dimensional extensions}\label{S4}

The last 8 $K3$ families in the table are mirror to the restriction of the quantum $D$-module of a Picard rank $r$ Fano 3-fold to the anticanonical 1-torus as in \cite{Go} -- called ``standard LG-models'' in \cite{DHK+}. The full $r$-parameter mirrors or ``parametrized LG-models'' are constructed in [op.~cit.], and (possibly after finite basechange) their $\H^2_{\text{tr}}$ is a Hermitian VHS over a type IV locally symmetric variety. Such modular families of $K3$ surfaces are known to have (countably) many indecomposable $K_1$ cycles on a very general fiber \cite{CDKL}, but this tells us nothing about the finite cover of the family on which they become well-defined. It would be interesting to know how our cycles on (and well-defined on!) the 1-parameter subfamilies extend to the $r$-parameter families: what divisors must be removed from the base, on what finite cover of the base they become well-defined, etc. 

Going beyond modular families, one may consider analogues of higher Green's functions on curves parametrizing even dimensional Calabi-Yau varieties. For instance, let $\Yb\to\PP^1$ be a family of CY 4-folds over $\PP^1$, whose middle cohomology VHS $\H^4$ has all Hodge numbers $1$. When $\Yb$ is extremal (i.e.~$\IH^1(\PP^1,\H^4)$ is trivial), and $t_0$ is a Hodge point (so that $\Hg^2(Y_{t_0})$ is nontrivial), there exists a nontorsion normal function $\nu\in\ANF_{\PP^1\setminus\{t_0\}}(\H^4(3))$. Suppose $t_1$ is another Hodge point. Then if Beilinson-Hodge holds, a cycle in $\fZ\in\CH^3(\Yb\setminus Y_{t_0},1)$ underlies the normal function and a cycle in $\CH^2(Y_{t_1})$ represents the second Hodge class. Passing to a finite cover, there exists a family of normalized real $(2,2)$-forms and we can pair this against the real regulator of $\fZ_t$ to get a ``Green's function'', whose (suitably normalized) special value at $t_1$ is again in $\QB\log\QB$. 

The geometrically natural pencils of (1,1,1,1,1) CY 4-folds arising as level sets of Laurent polynomials (and/or mirror to Fano 5-folds) produce VHSs which are as different from $\mathrm{Sym}^4\H^1$ of an elliptic curve family as possible. With generic Hodge group $\mathrm{SO}(2,3)$, the Hodge points are atypical and expected to be finite in number, in contrast to their abundance in the modular setting. These are closely related to CM and attractor points on families of (1,1,1,1) CY 3-folds by the so-called Yifan-Yang pullback (see \S 3.3.1 of \cite{DM}). Even to locate two Hodge points in many such families may be a challenge, but one worth undertaking with an eye to what such special values might have to say about the arithmetic geography of the Fano 5-fold mirrors.

\curraddr{${}$\\
\noun{Department of Mathematical and Statistical Sciences}\\
\noun{University of Alberta}\\
\noun{Edmonton, AB T6G 2G1, Canada}}
\email{${}$\\
\emph{e-mail}: charles.doran@ualberta.ca}

\curraddr{${}$\\
\noun{Dept. of Mathematics and Statistics, Campus Box 1146}\\
\noun{Washington University in St. Louis}\\
\noun{St. Louis, MO} \noun{63130, USA}}
\email{${}$\\
\emph{e-mail}: matkerr@wustl.edu}


\begin{thebibliography}{DHNT}

\bibitem[BKV]{BKV} S. Bloch, M. Kerr, and P. Vanhove, \emph{A Feynman integral via higher normal functions}, Compositio Math. 151 (2015), 2329-2375.

\bibitem[BF]{BF} F. Brown and T. Fonseca, \emph{Single-valued periods of meromorphic modular forms and a motivic interpretation of the Gross-Zagier conjecture}, preprint, arXiv:2508.04844v2.

\bibitem[BLY]{BLY} J. Bruinier, Y. Li, and T. Yang, \emph{Deformations of theta integrals and a conjecture of Gross-Zagier}, Forum Math. Sigma 13:Paper Np. e54, 59, 2025.

\bibitem[CDKL]{CDKL} X. Chen, C. Doran, M. Kerr, and J. Lewis. 
\emph{Normal functions, Picard-Fuchs equations, and elliptic fibrations on K3 surfaces}, J. Reine Angew. Math., no. 721, 2016, 43-79.

\bibitem[CC+1]{CC+1} T. Coates, A. Corti, S. Galkin, V. Golyshev and A. Kasprzyk, \emph{Mirror symmetry and Fano manifolds}, Euro. Congress of Mathematics, 285-300, Euro. Math. Soc., Z\"urich, 2013.

\bibitem[CC+2]{CC+2} T. Coates, A. Corti, S. Galkin, V. Golyshev and A. Kasprzyk, ``Fano varieties and extremal Laurent polynomials'', research blog and database, http://coates.ma.ic.ac.uk/fanosearch/

\bibitem[CCGK]{CCGK} T. Coates, A. Corti, S. Galkin and A Kasprzyk, \emph{Quantum periods for 3-dimensional Fano manifolds}, Geom. Topol. 20 (2016), no. 1, 103-256.

\bibitem[Co]{Co} A. Corti, \emph{Cluster varieties and toric specializations of Fano varieties}, preprint, arXiv:2304.04141v2.

\bibitem[CP]{CP} C. Cummins and S. Pauli, ``Congruence subgroups of $\SL_2(\ZZ)$'', database, https://math-sites.uncg.edu/sites/pauli/congruence/congruence.html

\bibitem[Dor]{Dor} C. Doran, \emph{Picard-Fuchs uniformization: Modularity of the mirror map and mirror-moonshine}, in ``The Arithmetic and Geometry of Algebraic Cycles (Gordon, Lewis, M\"{u}ller-Stach, Saito, Yui eds.)'', Centre de Recherches Math\'{e}matiques, CRM Proceedings and Lecture Notes, Volume 24, 2000, 257-281.

\bibitem[DHK+]{DHK+} C. Doran, A. Harder, L. Katzarkov, M. Ovcharenko and V. Przyjalkowski, \emph{Modularity of Landau-Ginzburg models}, Proceedings of the Steklov Institute of Mathematics, 2025, Vol. 328, pp. 157-295.

\bibitem[DHNT]{DHNT} C. Doran, A. Harder, A. Novoseltsev and A. Thompson, \emph{Families of lattice polarized $K3$ surfaces with monodromy}, IMRN (2015), no. 23, 12265-12318.

\bibitem[DK]{DK} C. Doran and M. Kerr, \emph{Algebraic $K$-theory of toric hypersurfaces}, Commun. Number Theory Phys. 5 (2011), no. 2, 397-600.

\bibitem[DoKo]{DoKo} C. Doran and J. Kostiuk, \emph{Geometric variations of local systems and elliptic surfaces}, Isr. J. Math. 258, 1-79 (2023).

\bibitem[DM]{DM}
C. Doran and A. Malmendier, \emph{Calabi-Yau manifolds realizing symplectically rigid monodromy tuples}, Adv. Theor. Math. Phys., Vol. 23, No. 5, 1271-1359, 2019.

\bibitem[Ga1]{Ga1} S. Galkin, \emph{Two instances of fake minimal Fano 3-folds}, IPMU 11-0099. Preprint SFB45 (2008).

\bibitem[Ga2]{Ga2} S. Galkin, \emph{Fano-Mathieu correspondence}, preprint, arXiv:1809.0273.

\bibitem[Go]{Go} V. Golyshev, \emph{Classification problems and mirror duality}, in ``Surveys in Geometry and Number Theory'', Cambridge Univ. Press, 2007, 88-121.

\bibitem[GK]{GK} V. Golyshev and M. Kerr, \emph{The arithmetic of Calabi-Yau motives and mobile higher regulators}, preprint, arXiv:2409.19549.

\bibitem[GKS]{GKS} V. Golyshev, M. Kerr, and T. Sasaki, \emph{Ap\'ery extensions}, J. London Math. Soc. 109 (2024), no. 1, e12825.

\bibitem[GoZ]{GoZ} V. Golyshev and D. Zagier, \emph{Proof of the gamma conjecture for Fano 3-folds of Picard rank 1}, Izv. Math. 80 (2016), no. 1, 24-49.

\bibitem[Gor]{Gor} B. Gordon, \emph{Algebraic cycles and the Hdge structure of a Kuga fiber variety}, Trans. AMS 336 (1993), no. 2, 933-947.

\bibitem[GKZ]{GKZ} B. Gross, W. Kohnen and D. Zagier, \emph{Heegner points and derivatives of $L$-series II}, Math. Ann. 278 (1987), 497-562.

\bibitem[GrZ]{GrZ} B. Gross and D. Zagier, \emph{Heegner points and derivaties of $L$-series}, Invent. Math. 84 (1986), no. 2, 225-320.

\bibitem[HLP]{HLP} N. O. Ilten, J. Lewis, and V. Przyjalkowski, \emph{Toric degenerations of Fano threefolds giving weak Landau-Ginzburg models}, J. of Algebra 374 (2013), 104-121.

\bibitem[Ke1]{Ke} M. Kerr, \emph{Indecomposable $K_1$ of elliptically fibered $K3$ surfaces: a tale of two cycles}, in ``Arithmetic and Geometry of $K3$ Surfaces and C-Y Threefolds (Laza, Sch\"utt, Yui eds.)'', Fields Inst. Commun. 67, Springer, New York, 2013, 387-409.

\bibitem[Ke2]{Ke2} M. Kerr, \emph{Motivic irrationality proofs}, arXiv:1708.03836v2, to appear in ``Regulators V'' proceedings.

\bibitem[KLM]{KLM} M. Kerr, J. Lewis, and S. M\"uller-Stach, \emph{The Abel-Jacobi map for higher Chow groups}, Compos. Math. 142 (2006), 374-396.

\bibitem[KP]{KP} M. Kerr and G. Pearlstein, \emph{An exponential history of functions with logarithmic growth}, in ``Topology of Stratified Spaces'', MSRI Pub. 58, Cambridge Univ. Press, New York, 2011, 281-374.

\bibitem[Me]{Me} A. Mellit, \emph{Higher Green's functions for modular forms}, Univerity of Bonn Ph.D.~Thesis, arXiv:0804.3184.

\bibitem[Sa]{Sa} T. Sasaki, \emph{Limits and singularities of normal functions}, European J. Math. 7 (2021), no. 4, 1-37.

\bibitem[Sc]{Sc} A. Scholl, \emph{Motives for modular forms}, Invent. Math. 100 (1990), no. 2, 419-430.

\bibitem[Sr1]{Sr1} R. Sreekantan, \emph{Algebraic cycles and values of Green's functions}, preprint, arXiv:2208.08325.

\bibitem[Sr2]{Sr2} R. Sreekantan, \emph{Algebraic cycles and values of Green's functions I -- products of elliptic curves}, preprint, arXiv:2502.04608.

\end{thebibliography}
\end{document}